\documentclass[12pt]{amsart}
\usepackage{amssymb,amsmath,amsthm,mathrsfs,multirow,xcolor,framed,url}
\usepackage[pdftex,
         pdfauthor={Rong Chen},
         pdftitle={temp},
         pdfsubject={MATHEMATICS},
         pdfkeywords={temp},
         pdfproducer={Latex with hyperref},
         pdfcreator={pdflatex}]{hyperref}
\usepackage [latin1]{inputenc}
\newtheorem{theorem}{Theorem}[section]
\newtheorem{lemma}[theorem]{Lemma}
\newtheorem{cor}[theorem]{Corollary}
\newtheorem{conj}[theorem]{Conjecture}
\newtheorem{prop}[theorem]{Proposition}

\theoremstyle{definition}

\newtheorem{example}[theorem]{Example}

\theoremstyle{remark}
\newtheorem{remark}[theorem]{Remark}

\numberwithin{equation}{section}

\newcommand\nutwid{\overset {\text{\lower 3pt\hbox{$\sim$}}}\nu}

\allowdisplaybreaks
\newcommand\omycite[1]{}

\newcommand{\beqs}{\begin{equation*}}
\newcommand{\eeqs}{\end{equation*}}
\newcommand{\beq}{\begin{equation}}
\newcommand{\eeq}{\end{equation}}
\begin{document}
%%PRELIMINARY VERSION
\title[Transformation Formulae and Applications for Double Lambert Series]{Transformation Formulae and Applications for Double Lambert Series}

\author{Rong Chen}
\address{Department of Mathematics, Shanghai Normal University, People's Republic of China}
\email{rchen@shnu.edu.cn}
\author{Tianjian Xu}
\address{Department of Mathematics, Shanghai University, People's Republic of China}
\email{xtjmath@shu.edu.cn}

\subjclass[2010]{11B65, 05A15}

\keywords{Lambert series, double Lambert series, transformations, divisor function}

\begin{abstract}
In this paper, we study a class of double Lambert series and establish several identities and transformation relations for them. These formulae provide useful tools for reducing certain double Lambert series to single Lambert series. As applications, we derive identities related to recent conjectures of Andrews, Dixit, Schultz, and Yee, and of Amdeberhan, Andrews, and Ballantine. We also propose a new proof of a result of Amdeberhan, Andrews, and Ballantine.
\end{abstract}
\maketitle
\section{Introduction}
In 1771, J. H. Lambert \cite{lam65} showed that the generating function of the divisor function can be expressed as
\begin{align*}
\sum_{n=1}^\infty\frac{q^n}{1-q^n}.
\end{align*}
This representation naturally paved the way for broader generalizations. A classical Lambert series is formally defined as
\begin{align*}
\sum_{n=1}^\infty\frac{a_nq^n}{1-q^n}.
\end{align*}
In particular, when $a_n=n^{2k-1}$, the resulting Lambert series is a modular form for $k\geq2$ and is a quasimodular form when $k=1$. An excellent survey of existing key results and properties of Lambert series identities can be found in the work of Schmidt \cite{sch20}. Grounded in this classical theoretical framework, recent progress has substantially extended the boundaries of Lambert series theory. A landmark contribution comes from Amdeberhan, Andrews, and Ballantine \cite{amanba26}, who developed a far-reaching generalization through the study of generalized Lambert series of the form
\begin{align*}
\sum_{n\geq1}R_n(q^n,q),
\end{align*}
where $R_n(x,y)$ is a rational function of $x$ and $y$. Moreover, they developed a two-parameter extension named the double Lambert series \cite{amanba26}, expressed by
\begin{align*}
\sum_{m,n\geq1}S_{n,m}(q^n,q^m,q),
\end{align*}
where $S_{n,m}(x,y,z)$ is a rational function of $x$, $y$, and $z$. They \cite{amanba26} established several connections between these series and Rogers-Ramanujan type q-series.

Recall the basic notation and definitions used throughout this paper. Assume that $|q|<1$ for convergence. The $q$-shifted factorials are defined as follows \cite{red}:
\begin{align*}
    (a;q)_0:=1,~~(a; q)_n := \prod_{k=0}^{n-1} (1 - a q^k),~~(a; q)_\infty := \prod_{k=0}^{\infty} (1 - a q^k).
\end{align*}
We also adopt the compact product notation:
\begin{align*}
    (a_1, \cdots, a_m; q)_k := (a_1; q)_k \cdots (a_m; q)_k, \quad k \in \mathbb{Z} \cup \{\infty\}.
\end{align*}

In this paper, we primarily focus on the following double Lambert series:
$$
A(x,y,z,w):=A(x,y,z,w;q):=\sum_{n=0}^\infty \sum_{m\geq n}\frac{x^ny^m}{(1-wq^n)(1-zq^m)},
$$
with $z\neq q^{-r}$, $w\neq q^{-r}$, $|xy|<1$, and $|y|<1$ for $r\in \mathbb{N}$. We provide some basic tools for handling this series. The main goal is to represent this type of double series as a single Lambert series,
$$
L(x,y_1,y_2,\dots,y_k):=L(x,y_1,y_2,\dots,y_k;q):=\sum_{n=0}^\infty \frac{x^n}{(1-y_1q^n)(1-y_2q^n)\dots(1-y_kq^n)},
$$
with $y_i\neq q^{-r}$ and $|x|<1$ for $r\in \mathbb{N}$. We also employ the bilateral Lambert series and the celebrated Ramanujan's ${}_1\psi_1$ summation identity \cite[Entry 29]{be91},
\begin{align}
\label{r:psi11}
L^*(x,y):=L^*(x,y;q):=\sum_{n\in \mathbb{Z}}\frac{x^n}{1-yq^n}=\frac{(q,q,xy,q/xy;q)_\infty}{(x,q/x,y,q/y;q)_\infty},
\end{align}
with $x\neq q^{r}$, $y\neq q^{r}$, and $|q|<|x|<1$.

In 2015, Andrews, Dixit, and Yee \cite{ady15} provided a new partition-theoretic interpretation for the coefficients of the classical Ramanujan/Watson mock theta function $\omega(q)$, formally defined by
\begin{align*}
\omega(q):=\sum_{n=0}^\infty\frac{q^{2n^2+2n}}{(q;q^2)_{n+1}^2}.
\end{align*}
Specifically, $p_\omega(n)$, which is the coefficient of $q^n$ in $q\omega(q)$, counts the number of partitions of $n$ in which all odd parts are less than twice the smallest part. Building on this work, Andrews, Dixit, Schultz and Yee \cite{adsy17} generalized the interpretation to overpartitions, a variant of partitions where each distinct part may be overlined exactly once.

This generalization led them \cite{adsy17} to introduce the counting function $\bar{p}_\omega(n)$, which enumerates overpartitions of $n$ with two additional constraints: all odd parts are less than twice the smallest part, and the smallest part is necessarily overlined. Using q-series techniques, they \cite[Theorem 1.4]{adsy17} derived several remarkable congruence properties for this function, most notably that
\begin{align*}
\bar{p}_\omega(4n+3)&\equiv0~(mod~4),\\
\bar{p}_\omega(8n+6)&\equiv0~(mod~4)
\end{align*}
for all non-negative integers $n$. In the final section of their paper, they remarked that these two congruences can be proven alternatively, provided that the conjecture stated below is valid.
\begin{conj}\cite[Problem 2]{adsy17}\label{adsy}
Let
\begin{align*}
Y(q):=\sum_{n,m\geq1}\frac{(-1)^mq^{2mn+m}}{(1+q^n)(1-q^{2m-1})}.
\end{align*}
Then $Y(q)$ is an odd function of $q$.
\end{conj}

Amdeberhan, Andrews, and Ballantine \cite{amanba26} came up with the following two conjectures when they were attempting to prove Conjecture \ref{adsy}.
\begin{conj}\cite[Conjecture 5.12]{amanba26}\label{AAB-conj-I}
Let $a$ be a positive integer. Then, for each positive integer $n$, we have
\begin{align*}
[q^{n2^a}]\sum_{m,n\geq 1}\frac{q^{mn2^a}}{(1+q^{n2^{a-1}})(1-q^{2m-1})}=\sigma_1(n),
\end{align*}
where $\sigma_1(n)$ is the sum of divisors of $n$.
\end{conj}
\begin{conj}\cite[Conjecture 5.13]{amanba26}\label{AAB-conj-II}
If $r$ is a positive integer, then
\begin{align*}
[q^{2r}]\sum_{m,n\geq1}\frac{q^{2mn}}{(1+q^{2n-1})(1-q^{2m-1})}=[q^{2r}]\sum_{n\geq1}\frac{(n-1)q^n}{1+q^{2n-1}}.
\end{align*}
\end{conj}

Recently, Conjecture \ref{adsy} and \ref{AAB-conj-II} have been proved by Cui and Tang \cite{ct26}. Fang \cite{fang26} independently provided a short proof of Conjecture \ref{adsy} earlier. Meanwhile, Kumar and Singh \cite{ks26} proved Conjecture \ref{AAB-conj-I}.

In this paper, we present some transformation relations of $A(x,y,z,w)$ as its fundamental characteristics in Section \ref{trans-rela} and these relations have been employed for special cases in \cite{ct26,fang26,ks26}. We primarily utilize these properties to derive a relation for $A(x,y,z,w)$ via formal manipulations, without considering convergence. Then we derive certain formulae that yield Conjectures \ref{adsy}, \ref{AAB-conj-I}, and \ref{AAB-conj-II}. We prove these formulae using analytic methods, thereby avoiding the repeated application of Propositions \ref{r:prop1} and \ref{r:prop2}. For example, we have (see Example \ref{r:adsy} below)
\begin{align*}
&\sum_{m,n\geq 1}\frac{(-1)^mq^{2mn+m}x^nz^m}{(1+zq^n)(1-q^{2m-1})}\\
=&xzq^2\sum_{k=0}^\infty \frac{x^kq^{k}}{1+zq^{2k+1}}\sum_{k=0}^\infty \frac{x^kq^{2k}}{1+zq^{2k+1}}
-xzq^2\sum_{k=0}^\infty \frac{q^{k}}{1+zq^{2k+1}}\sum_{k=0}^\infty \frac{x^kq^{k}}{1+zq^{k+1}}.
\end{align*}
Setting $x=z=1$ and using \eqref{r:psi11} we have \cite[Theorem 1.2]{ct26}
\begin{align*}
Y(q)=-q\frac{(q^4;q^4)_\infty^4}{(q^2;q^2)_\infty^2}\sum_{k=1}^{\infty}\frac{q^{2k}}{1+q^{2k}},
\end{align*}
which implies Conjecture \ref{adsy}.

The rest of this paper is organized as follows. In Section \ref{trans-rela}, we establish several fundamental transformation formulae for the double Lambert series $A(x,y,z,w)$. In Section \ref{example}, we present more formulae for the double Lambert series $A(x,y,z,w)$. We first derive identities related to the conjectures of Andrews-Dixit-Schultz-Yee and Amdeberhan-Andrews-Ballantine, thereby giving alternative proofs of Conjectures \ref{adsy}, \ref{AAB-conj-I}, and \ref{AAB-conj-II}. We then present a new proof of a result of Amdeberhan, Andrews, and Ballantine.

\section{Double Lambert series}\label{trans-rela}
In this section, we present and prove some transformation relations of $A(x,y,z,w)$ as Proposition \ref{r:prop1} and Proposition \ref{r:prop2}.
\begin{prop}
\label{r:prop1}
We have the following transformation formula,
\beq
\label{r:m1}
A(x,y,z,w)=A(z/w,w,xy,y),
\eeq
with $|xy|<1$, $|y|<1$, $|z|<1$, and $|w|<1$,
\beq
\label{r:m2}
A(x,y,z,w)+A(y,x,w,z)=L(x,w)L(y,z)+L(xy,z,w),
\eeq
with $|x|<1$, $|y|<1$, $z\neq q^{-r}$, and $w\neq q^{-r}$ for $r\in\mathbb{N}$.
\end{prop}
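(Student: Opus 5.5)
Both identities come from expanding the Lambert denominators as geometric series and rearranging absolutely convergent multiple sums.

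\emph{Proof of \eqref{r:m1}.} Under $|z|<1$ and $|w|<1$ we may write
\[
\frac{1}{1-wq^n}=\sum_{j\ge0}w^jq^{nj},\qquad \frac{1}{1-zq^m}=\sum_{k\ge0}z^kq^{mk}.
\]
Put $m=n+\ell$ with $\ell\ge0$. Then
\[
A(x,y,z,w)=\sum_{n,\ell,j,k\ge0} x^n y^{n+\ell} w^j z^k q^{nj+(n+\ell)k}.
\]
Next sum over $n$ and $\ell$ as geometric series:
\[
\sum_{n}(xy\,q^{j+k})^n=\frac{1}{1-xyq^{j+k}},\qquad \sum_\ell (yq^k)^\ell=\frac{1}{1-yq^k}.
\]
This gives
\[
A(x,y,z,w)=\sum_{j,k\ge0}\frac{w^jz^k}{(1-xyq^{j+k})(1-yq^k)}.
\]
Now set $j'=k$ and $m'=j+k\ge j'$. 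Then $w^jz^k=(z/w)^{j'}w^{m'}$, and the sum becomes
\[
\sum_{j'\ge0}\sum_{m'\ge j'}\frac{(z/w)^{j'}w^{m'}}{(1-yq^{j'})(1-xyq^{m'})}=A(z/w,w,xy,y).
\]
The hypotheses $|xy|,|y|,|z|,|w|<1$ make the quadruple sum absolutely convergent, since $|q|<1$ and each exponent of $q$ is nonnegative, so all the rearrangements are justified. The series for $A(z/w,w,xy,y)$ itself converges because $|(z/w)w|=|z|<1$ and $|w|<1$.

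\emph{Proof of \eqref{r:m2}.} Here no expansion is needed. Swap the names of the indices in $A(y,x,w,z)$ to get
\[
A(y,x,w,z)=\sum_{m\ge0}\sum_{0\le n\le m}\frac{y^m x^n}{(1-zq^m)(1-wq^n)}
=\sum_{0\le m\le n}\frac{x^n y^m}{(1-wq^n)(1-zq^m)},
\]
where in the second form we have renamed so that the summand matches that of $A(x,y,z,w)$. Thus $A(x,y,z,w)$ sums the common summand over $m\ge n$, and $A(y,x,w,z)$ sums it over $m\le n$. Their sum is the full sum over all $(n,m)\in\mathbb{N}^2$ plus an extra copy of the diagonal $m=n$. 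The full sum factors as
\[
\Bigl(\sum_n \frac{x^n}{1-wq^n}\Bigr)\Bigl(\sum_m\frac{y^m}{1-zq^m}\Bigr)=L(x,w)L(y,z).
\]
The diagonal contributes
\[
\sum_n\frac{(xy)^n}{(1-wq^n)(1-zq^n)}=L(xy,z,w).
\]
Absolute convergence holds because $|x|,|y|<1$ and the denominators stay bounded away from $0$ (they tend to $1$ and never vanish, as $z,w\ne q^{-r}$). This justifies splitting and factoring the double sum.

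The only delicate point is bookkeeping: tracking the reindexing in \eqref{r:m1} so that the roles of $z/w$ and $w$ land correctly, and confirming convergence of the rearranged sums.
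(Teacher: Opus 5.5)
Your proof is correct and is essentially the paper's argument. For \eqref{r:m1} you shift $m=n+\ell$, expand both denominators geometrically and resum over $n,\ell$ to reach $\sum_{j,k\ge0} w^jz^k/\bigl((1-xyq^{j+k})(1-yq^k)\bigr)$. For \eqref{r:m2} you note that the regions $m\ge n$ and $m\le n$ cover all of $\mathbb{N}^2$ with the diagonal counted twice, giving $L(x,w)L(y,z)+L(xy,z,w)$. Both steps match the paper, and your convergence justification is somewhat more careful than the paper's.

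One small slip: the middle expression in your display for $A(y,x,w,z)$ has the inner range reversed. As written, $\sum_{m\ge0}\sum_{0\le n\le m} y^mx^n/\bigl((1-zq^m)(1-wq^n)\bigr)$ is $A(x,y,z,w)$ itself; it should read $\sum_{m\ge0}\sum_{n\ge m}$. Your final form $\sum_{0\le m\le n}$ and the conclusion are nonetheless right.
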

\begin{proof}
Rearranging and using properties of geometric series, we obtain
\begin{align*}
A(x,y,z,w)&=\sum_{n=0}^\infty \sum_{m=0}^\infty\frac{x^ny^{m+n}}{(1-wq^n)(1-zq^{n+m})}\\
&=\sum_{m,n\geq 0}\sum_{i,j\geq 0}x^ny^{m+n}(wq^n)^i(zq^{m+n})^j\\
&=\sum_{i,j\geq 0}\frac{w^iz^j}{(1-xyq^{i+j})(1-yq^j)}=A(z/w,w,xy,y).
\end{align*}
Similarly, we obtain
\begin{align*}
&A(x,y,z,w)+A(y,x,w,z)\\
=&\sum_{n=0}^\infty \sum_{m\geq n}\frac{x^ny^m}{(1-wq^n)(1-zq^m)}+\sum_{n=0}^\infty \sum_{m\geq n}\frac{y^nx^m}{(1-zq^n)(1-wq^m)}\\
=&\sum_{n=0}^\infty \sum_{m\geq n}\frac{x^ny^m}{(1-wq^n)(1-zq^m)}+\sum_{m=0}^\infty \sum_{n=0}^m\frac{y^nx^m}{(1-zq^n)(1-wq^m)}\\
=&\sum_{n=0}^\infty \sum_{m=0}^\infty\frac{x^ny^m}{(1-wq^n)(1-zq^m)}+\sum_{n=0}^\infty\frac{x^ny^n}{(1-wq^n)(1-zq^n)}\\
=&L(x,w)L(y,z)+L(xy,z,w).
\end{align*}
\end{proof}
We now provide an explanation of Proposition \ref{r:prop1}. Let $(x,y,z,w)$ be a quadruple. Define $S(x,y,z,w)=(z/w,w,xy,y)$ and $T(x,y,z,w)=(y,x,w,z)$ according to \eqref{r:m1} and \eqref{r:m2} respectively. It can be verified that $S$ and $T$ generate a group of order $24$ under composition, satisfying $S^2=T^2=(ST)^{12}=I$, where $I$ is the identity transformation. This implies that we can relate $A(x,y,z,w)$ to up to $23$ other series via Proposition \ref{r:prop1}.

We also have the following basic $q$-lifting transformation relations.
\begin{prop}
\label{r:prop2}
The series $A(x,y,z,w)$ satisfies the following identity,
\begin{align}
\nonumber
A(x,y,z,w)=&wA(xq,y,z,w)+L(z,y,xy)\\
\nonumber
=&zA(x,yq,z,w)+\frac{1}{1-y}L(xy,w)\\
\label{r:sfz}
=&yA(x,y,zq,w)+L(xy,z,w)\\
\nonumber
=&xA(x,y,z,wq)+\frac{1}{1-w}L(y,z)-xL(xy,z,wq).
\end{align}
\end{prop}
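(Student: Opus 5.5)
All four identities in \eqref{r:sfz} come from the same device. We subtract the shifted series from $A(x,y,z,w)$ termwise. The shift either cancels one Lambert denominator outright, or makes a telescoping difference in one index. What remains is a single sum. Throughout we work under the convergence hypotheses on $A$ and $L$ (e.g.\ $|x|,|y|,|z|,|w|<1$ where needed), so all double series are absolutely convergent and may be rearranged freely, as in the proof of Proposition \ref{r:prop1}.

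\emph{First and second lines.} Since $(wq^n)$ is the only change, $A(x,y,z,w)-wA(xq,y,z,w)$ has summand
\[
\frac{x^ny^m(1-wq^n)}{(1-wq^n)(1-zq^m)}=\frac{x^ny^m}{1-zq^m}.
\]
Swapping the order of summation gives $\sum_{m\ge0}\frac{y^m}{1-zq^m}\cdot\frac{1-x^{m+1}}{1-x}$. Next expand $\frac1{1-zq^m}=\sum_j z^jq^{mj}$ and sum over $m$ first. This yields
\[
\sum_j \frac{z^j}{1-x}\Bigl(\frac1{1-yq^j}-\frac{x}{1-xyq^j}\Bigr)=\sum_j\frac{z^j}{(1-yq^j)(1-xyq^j)}=L(z,y,xy),
\]
by an elementary partial-fraction check. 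Likewise, $A-zA(x,yq,z,w)$ has summand $\frac{x^ny^m}{1-wq^n}$. Summing the geometric series $\sum_{m\ge n}y^m=\frac{y^n}{1-y}$ leaves $\frac1{1-y}L(xy,w)$.

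\emph{Third and fourth lines.} These are telescoping sums. For the third, write
\[
A-yA(x,y,zq,w)=\sum_n\frac{x^n}{1-wq^n}\sum_{m\ge n}\Bigl(\frac{y^m}{1-zq^m}-\frac{y^{m+1}}{1-zq^{m+1}}\Bigr).
\]
The inner sum telescopes to $\frac{y^n}{1-zq^n}$, since $y^m\to0$. The result is $L(xy,z,w)$. For the fourth, interchange the order so that $n$ runs over $0\le n\le m$:
\[
A-xA(x,y,z,wq)=\sum_m\frac{y^m}{1-zq^m}\sum_{n=0}^m\Bigl(\frac{x^n}{1-wq^n}-\frac{x^{n+1}}{1-wq^{n+1}}\Bigr).
\]
This finite sum telescopes to $\frac1{1-w}-\frac{x^{m+1}}{1-wq^{m+1}}$. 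Splitting the result into two sums gives $\frac1{1-w}L(y,z)-xL(xy,z,wq)$.

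The only step that is not purely mechanical is the first line. There the naive computation produces $\sum_m \frac{y^m(1-x^{m+1})}{(1-x)(1-zq^m)}$ rather than the stated $L(z,y,xy)$. To match the stated form one must re-expand the $z$-denominator, which is the same exchange of roles used for \eqref{r:m1}, and then apply the partial fraction identity above. Verifying that identity is routine, so I expect no real obstacle beyond that bookkeeping and justifying the interchanges by absolute convergence.
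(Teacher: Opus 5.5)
Your proof is correct and follows the paper's argument: subtract termwise so that one denominator cancels (first and second lines) or the sum telescopes after an index shift (third and fourth lines); your telescoping in $n$ for the fourth line is the same boundary-term bookkeeping the paper does via $n\mapsto n-1$. The only cosmetic difference is in the first line. There the paper substitutes $m\mapsto m+n$ and sums the triple geometric series $\sum_{m,n,i}x^ny^{m+n}(zq^{m+n})^i$ directly to get $\frac{1}{(1-xyq^i)(1-yq^i)}$, which avoids your intermediate factor $\frac{1-x^{m+1}}{1-x}$, the implicit assumption $x\neq1$ it carries, and the partial-fraction step.
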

\begin{proof}
Taking the difference of the double Lambert series on both sides respectively, we obtain
\begin{align*}
&A(x,y,z,w)-wA(xq,y,z,w)\\
=&\sum_{n=0}^\infty \sum_{m\geq n}\frac{x^ny^m}{(1-wq^n)(1-zq^m)}-w\sum_{n=0}^\infty \sum_{m\geq n}\frac{x^nq^ny^m}{(1-wq^n)(1-zq^m)}\\
=&\sum_{n=0}^\infty \sum_{m\geq n}\frac{x^ny^m}{1-zq^m}=\sum_{n=0}^\infty \sum_{m=0}^\infty\frac{x^ny^{m+n}}{1-zq^{m+n}}\\
=&\sum_{m,n,i\geq 0}x^ny^{m+n}(zq^{m+n})^i=\sum_{i=0}^\infty\frac{z^i}{(1-xyq^i)(1-yq^i)}=L(z,y,xy),
\end{align*}
\begin{align*}
&A(x,y,z,w)-zA(x,yq,z,w)\\
=&\sum_{n=0}^\infty \sum_{m\geq n}\frac{x^ny^m}{(1-wq^n)(1-zq^m)}-z\sum_{n=0}^\infty \sum_{m\geq n}\frac{x^nq^my^m}{(1-wq^n)(1-zq^m)}\\
=&\sum_{n=0}^\infty \sum_{m\geq n}\frac{x^ny^m}{(1-wq^n)}=\sum_{n=0}^\infty \sum_{m=0}^\infty\frac{x^ny^{m+n}}{(1-wq^n)}=\frac{1}{1-y}\sum_{n=0}^\infty\frac{x^ny^n}{(1-wq^n)}=\frac{1}{1-y}L(xy,w),
\end{align*}
\begin{align*}
&A(x,y,z,w)-yA(x,y,zq,w)\\
=&\sum_{n=0}^\infty \sum_{m\geq n}\frac{x^ny^m}{(1-wq^n)(1-zq^m)}-\sum_{n=0}^\infty \sum_{m\geq n}\frac{x^ny^{m+1}}{(1-wq^n)(1-zq^{m+1})}\\
=&\sum_{n=0}^\infty \sum_{m\geq n}\frac{x^ny^m}{(1-wq^n)(1-zq^m)}-\sum_{n=0}^\infty \sum_{m\geq n+1}\frac{x^ny^m}{(1-wq^n)(1-zq^m)}\\
=&\sum_{n=0}^\infty\frac{x^ny^n}{(1-wq^n)(1-zq^n)}=L(xy,z,w),
\end{align*}
and
\begin{align*}
&A(x,y,z,w)-xA(x,y,z,wq)\\
=&\sum_{n=0}^\infty \sum_{m\geq n}\frac{x^ny^m}{(1-wq^n)(1-zq^m)}-\sum_{n=0}^\infty \sum_{m\geq n}\frac{x^{n+1}y^m}{(1-wq^{n+1})(1-zq^m)}\\
=&\sum_{n=0}^\infty \sum_{m\geq n}\frac{x^ny^m}{(1-wq^n)(1-zq^m)}-\sum_{n=1}^\infty \sum_{m\geq n-1}\frac{x^ny^m}{(1-wq^n)(1-zq^m)}\\
=&\sum_{n=0}^\infty \sum_{m\geq n}\frac{x^ny^m}{(1-wq^n)(1-zq^m)}-\sum_{n=1}^\infty \sum_{m\geq n}\frac{x^ny^m}{(1-wq^n)(1-zq^m)}-\sum_{n=1}^\infty\frac{x^ny^{n-1}}{(1-wq^n)(1-zq^{n-1})}\\
=&\sum_{m=0}^\infty\frac{y^m}{(1-w)(1-zq^m)}-\sum_{n=0}^\infty\frac{x^{n+1}y^n}{(1-wq^{n+1})(1-zq^n)}\\
=&\frac{1}{1-w}L(y,z)-xL(xy,wq,z).
\end{align*}
\end{proof}
\begin{remark}
Here we do not strictly discuss the convergence for the identities in Proposition \ref{r:prop2}, as we have not actually used this property to complete any proof.
\end{remark}
\section{Examples}\label{example}

\subsection{A useful example of transformation formulae}
We begin with the following useful example and its corollaries.
\begin{example}
For $|x|<1$ and $y\neq q^{-r}$ for $r\in\mathbb{N}$, we obtain
\begin{align}
\label{r:xxyy}
A(x,x,y,y)=\frac{1}{2}L(x,y)^2+\frac{1}{2}L(x^2,y,y).
\end{align}
\end{example}

\begin{proof}
Fix $k\geq0$ and compare the coefficients of $x^k$ on both sides. For the left-hand side,
\begin{align*}
[x^k]A(x,x,y,y)=[x^k]\sum_{n\geq0}\sum_{m\geq n}\frac{x^{m+n}}{(1-yq^m)(1-yq^n)}=\sum_{\substack{n+m=k \\ 0\leq n\leq m}}\frac{1}{(1-yq^m)(1-yq^n)}.
\end{align*}
For the right-hand side,
\begin{align*}
[x^k]L(x,y)^2=[x^k]\left(\sum_{n=0}^\infty \frac{x^n}{1-yq^n}\right)^2=\sum_{n+m=k}\frac{1}{(1-yq^m)(1-yq^n)}
\end{align*}
and
\begin{align*}
[x^k]L(x^2,y,y)=[x^k]\sum_{n=0}^\infty \frac{x^{2n}}{(1-yq^n)^2}=
\begin{cases}
\dfrac{1}{(1-y q^{k/2})^2} & \text{if } k \text{ is even}, \\
0 & \text{if } k \text{ is odd}.
\end{cases}
\end{align*}

When $k$ is odd, then $m\neq n$ for $m+n=k$. By the symmetry of $m$ and $n$, we obtain
\begin{align*}
[x^k]\text{RHS}=\frac{1}{2}\sum_{n+m=k}\frac{1}{(1-y q^n)(1-y q^m)}=\sum_{\substack{n+m=k \\ 0\leq n< m}}\frac{1}{(1-yq^m)(1-yq^n)}=[x^k]\text{LHS}.
\end{align*}
When $k$ is even, let $k=2t$ for some integer $t \geq 0$. At this point, there exists $m=n=t$ for $m+n=k$. By the symmetry of $m$ and $n$, we obtain
\begin{align*}
[x^k]\text{RHS}&=\frac{1}{2}\sum_{n+m=2t}\frac{1}{(1-y q^n)(1-y q^m)}+\frac{1}{2(1-yq^t)^2}\\
&=\sum_{\substack{n+m=2t \\ 0\leq n< m}}\frac{1}{(1-yq^m)(1-yq^n)}+\frac{1}{(1-yq^t)^2}\\
&=\sum_{\substack{n+m=k \\ 0\leq n\leq m}}\frac{1}{(1-yq^m)(1-yq^n)}=[x^k]\text{LHS}.
\end{align*}
\end{proof}

As the applications of \eqref{r:xxyy}, we recover the following identity, which is connected with OEIS A002133.
\begin{cor}
We have
\begin{align*}
\sum_{m=1}^\infty\sum_{n=1}^{m-1}\frac{q^{n+m}}{(1-q^n)(1-q^m)}=\frac{1}{2}(G(q)^2-H(q)),
\end{align*}
where
$$
G(q)=\sum_{k>0}\frac{q^k}{1-q^k},\qquad H(q)=\sum_{k>0}\frac{q^{2k}}{(1-q^k)^2}.
$$
\end{cor}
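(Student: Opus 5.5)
We specialize the example \eqref{r:xxyy} to $x=q$, $y=1$. Since $1-q^n$ vanishes at $n=0$, we cannot literally put $y=1$; instead we first shift the summation indices so that the $n=0$ (and $m=0$) terms are removed. The cleanest route is to apply \eqref{r:xxyy} with $x=q$ and $y=q$:
\begin{align*}
A(q,q,q,q)=\sum_{n\ge0}\sum_{m\ge n}\frac{q^{n+m}}{(1-q^{n+1})(1-q^{m+1})}.
\end{align*}
Replacing $n\to n-1$ and $m\to m-1$, this equals $q^{-2}\sum_{1\le n\le m}\frac{q^{n+m}}{(1-q^n)(1-q^m)}$. On the right-hand side,
\begin{align*}
L(q,q)=\sum_{n\ge0}\frac{q^n}{1-q^{n+1}}=q^{-1}G(q),\qquad L(q^2,q,q)=\sum_{n\ge0}\frac{q^{2n}}{(1-q^{n+1})^2}=q^{-2}H(q).
\end{align*}
The hypotheses of the example ($|x|=|q|<1$, $y=q\neq q^{-r}$) are satisfied. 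Multiplying through by $q^2$ therefore gives
\begin{align*}
\sum_{1\le n\le m}\frac{q^{n+m}}{(1-q^n)(1-q^m)}=\tfrac12 G(q)^2+\tfrac12 H(q).
\end{align*}

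It remains to pass from the range $n\le m$ to the strict range $1\le n\le m-1$ appearing in the corollary. The diagonal terms $n=m$ contribute
\begin{align*}
\sum_{n\ge1}\frac{q^{2n}}{(1-q^n)^2}=H(q).
\end{align*}
Subtracting this contribution yields
\begin{align*}
\sum_{m\ge1}\sum_{n=1}^{m-1}\frac{q^{n+m}}{(1-q^n)(1-q^m)}=\tfrac12 G(q)^2-\tfrac12 H(q),
\end{align*}
which is the claimed identity.

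The only point needing care is the bookkeeping of the index shift together with the diagonal subtraction, since this is what flips the sign of $H$. Absolute convergence for $|q|<1$ justifies all the rearrangements.
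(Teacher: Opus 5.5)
Your proof is correct and follows the paper's route: you specialize \eqref{r:xxyy} at $x=y=q$ to get $q^2A(q,q,q,q)=\tfrac12\bigl(G(q)^2+H(q)\bigr)$, then strip off the diagonal $n=m$ terms, which contribute $H(q)$. The only cosmetic difference is that the paper removes the diagonal by invoking \eqref{r:sfz} in the form $q^3A(q,q,q^2,q)=q^2A(q,q,q,q)-q^2L(q^2,q,q)$, whereas you split it off by hand; your opening mention of $y=1$ is a false start, and you rightly abandon it.
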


This follows easily from \eqref{r:xxyy} and \eqref{r:sfz},
\begin{align*}
\sum_{m=1}^\infty\sum_{n=1}^{m-1}\frac{q^{n+m}}{(1-q^n)(1-q^m)}=&q^3A(q,q,q^2,q)\\
=&q^2A(q,q,q,q)-H(q)=\frac{1}{2}(G(q)^2-H(q)).
\end{align*}

Similarly, \cite[Theorem 5.7]{amanba26} provides another example, which was motivated by an attempt to approach Conjecture \ref{adsy}.

\begin{cor}
We have
\begin{align*}
\sum_{m,n\geq 1}\frac{(-q)^{2mn+m-1}}{(1+q^{2n-1})(1-q^{2m-1})}=\sum_{n\geq 1}\frac{q^{4n-2}}{(1-q^{4n-2})^2}.
\end{align*}
\end{cor}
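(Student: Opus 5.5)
The plan is to rewrite the left-hand side as a specialization of $A(x,y,z,w;q^2)$. Then \eqref{r:sfz} and \eqref{r:xxyy} will reduce it to single Lambert series in base $q^2$, and the last step is a closed-form identity among those single series.

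\textbf{Step 1 (summing out $m$).} Expand $1/(1-q^{2m-1})=\sum_{j\ge0}q^{(2m-1)j}$. The summand with fixed $n,j$ is then $q^{-j-1}(-1)^{m-1}(q^{2n+2j+1})^m$. Summing the geometric series over $m\ge1$ gives
\begin{align*}
\sum_{m,n\geq 1}\frac{(-q)^{2mn+m-1}}{(1+q^{2n-1})(1-q^{2m-1})}
=\sum_{n\ge1}\sum_{j\ge0}\frac{q^{2n+j}}{(1+q^{2n-1})(1+q^{2n+2j+1})}.
\end{align*}
Put $N=n+j\ge n$, so that $q^{2n+j}=q^{n+N}$. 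Then shift $n=a+1$, $N=b+1$ with $0\le a\le b$. In base $p=q^2$ each summand becomes
\begin{align*}
\frac{q^2\,q^a q^b}{(1-(-q)p^a)(1-(-q^3)p^b)},
\end{align*}
so the left-hand side equals $q^2A(q,q,-q^3,-q;q^2)$.

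\textbf{Step 2 (reduction).} Apply \eqref{r:sfz} in base $q^2$ with $x=y=q$, $z=-q$, $w=-q$. Since $zq^2=-q^3$, this gives
\begin{align*}
q^2A(q,q,-q^3,-q;q^2)=q\big(A(q,q,-q,-q;q^2)-L(q^2,-q,-q;q^2)\big).
\end{align*}
By \eqref{r:xxyy}, the left-hand side of the corollary is therefore
\begin{align*}
\frac{q}{2}\Big(L(q,-q;q^2)^2-L(q^2,-q,-q;q^2)\Big),
\end{align*}
where
\begin{align*}
L(q,-q;q^2)=\sum_{n\ge0}\frac{q^n}{1+q^{2n+1}},\qquad
L(q^2,-q,-q;q^2)=\sum_{n\ge0}\frac{q^{2n}}{(1+q^{2n+1})^2}.
\end{align*}
For this step I would check convergence ($|q|<1$, and no denominator vanishes) so that the formal manipulations are legitimate.

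\textbf{Step 3 (main obstacle).} It remains to show
\begin{align*}
\frac{q}{2}\Big(L(q,-q;q^2)^2-L(q^2,-q,-q;q^2)\Big)=\sum_{n\ge1}\frac{q^{4n-2}}{(1-q^{4n-2})^2}.
\end{align*}
I expect this to be the hardest step.

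For the square, expanding the single series gives
\begin{align*}
qL(q,-q;q^2)=\sum_{n,k\ge0}(-1)^k q^{((2n+1)(2k+1)+1)/2},
\end{align*}
a signed divisor sum over odd numbers. I would first try to identify $L(q,-q;q^2)$ as a theta quotient. One route is to symmetrize it into a bilateral series and use \eqref{r:psi11} with base $q^2$. The other is to recognize the divisor sum as the $\chi_{-4}$-twisted count, which gives a product such as $(q^4;q^4)_\infty^4/(q^2;q^2)_\infty^2$-type. Its square then has a divisor-sum expansion via the known Lambert series for $r_4$-type products.

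The second term $L(q^2,-q,-q;q^2)$ expands directly as $\sum_{n,k}(-1)^k(k+1)q^{2n+k(2n+1)}$, which is again a weighted divisor sum. The right-hand side expands as $\sum_{n\ge1}\sum_{k\ge1}k\,q^{k(4n-2)}$.

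I would then compare coefficients of $q^{N}$ on both sides as divisor sums, keeping track of parity. If the theta-function route stalls, the fallback is to differentiate \eqref{r:psi11} in $y$. Setting $y$ appropriately produces the squared-denominator series, and the product side yields the square of the one-variable series, which gives the needed relation between $L^2$ and the quadratic term.
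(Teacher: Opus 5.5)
Your Steps 1 and 2 are correct and are exactly the paper's argument. The paper also writes the left side as $q\sum_{n\ge0}\frac{q^n}{1+q^{2n+1}}\sum_{j=0}^{n-1}\frac{q^j}{1+q^{2j+1}}$, which is your $q^2A(q,q,-q^3,-q;q^2)$. It rewrites this as $qA(q,q,-q,-q;q^2)-qL(q^2,-q,-q;q^2)$ and applies \eqref{r:xxyy} to reach $\frac{q}{2}\bigl(L(q,-q;q^2)^2-L(q^2,-q,-q;q^2)\bigr)$. The paper then stops and defers the remaining single-series identity to \cite{amanba26}. So the gap is your Step 3: as written it is a list of strategies, not an argument. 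Your first route does close it, but it needs one classical input that is not in the paper.

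\textbf{How the first route closes.}
\begin{enumerate}
\item Evaluate $L(q,-q;q^2)$. The terms $n=-k-1$ ($k\ge0$) of $L^*(q,-q;q^2)$ equal $\frac{q^k}{1+q^{2k+1}}$, so $L^*(q,-q;q^2)=2L(q,-q;q^2)$. Apply \eqref{r:psi11} in base $q^2$ with $x=q$, $y=-q$, using $(-1;q^2)_\infty=2(-q^2;q^2)_\infty$. This gives $L(q,-q;q^2)=\frac{(q^4;q^4)_\infty^4}{(q^2;q^2)_\infty^2}=\psi(q^2)^2$, where $\psi(q)=\sum_{n\ge0}q^{n(n+1)/2}$.
\item Square it. Legendre's four-triangular-numbers identity $\psi(q)^4=\sum_{n\ge0}\sigma_1(2n+1)q^n$ is your ``$r_4$-type'' input. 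It yields $\frac{q}{2}L(q,-q;q^2)^2=\frac12\sum_{M\ \mathrm{odd}}\sigma_1(M)q^M$.
\item Expand the second term:
\[
qL(q^2,-q,-q;q^2)=\sum_{n\ge0}\sum_{j\ge1}(-1)^{j-1}j\,q^{j(2n+1)}=\sum_{M\ge1}c(M)q^M,\qquad c(M)=\sum_{j\mid M,\ M/j\ \mathrm{odd}}(-1)^{j-1}j.
\]
\item Compare coefficients. For odd $M$ every such $j$ is odd, so $c(M)=\sigma_1(M)$ and the odd coefficients cancel. For $M=2M'$ every such $j$ is even, so $-\frac12c(M)=\sum_{d\mid M',\ M'/d\ \mathrm{odd}}d$. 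This is exactly the coefficient of $q^{2M'}$ in $\sum_{n,k\ge1}k\,q^{2k(2n-1)}=\sum_{n\ge1}\frac{q^{4n-2}}{(1-q^{4n-2})^2}$.
\end{enumerate}

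With these lines added your proof is complete, and it is self-contained where the paper cites \cite{amanba26}. The fallback via differentiating \eqref{r:psi11} is not needed.
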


We do not provide a full proof of this equation as it has already been established in \cite{amanba26}. We employ \eqref{r:xxyy} to reduce it to a single sum--an approach that is , in fact, consistent with the one used in \cite{amanba26}.
\begin{align*}
&\sum_{m,n\geq 1}\frac{(-q)^{2mn+m-1}}{(1+q^{2n-1})(1-q^{2m-1})}=q\sum_{n=0}^\infty\frac{q^n}{1+q^{2n+1}}\sum_{j=0}^{n-1}\frac{q^{j}}{1+q^{2j+1}}\\
=&qA(q,q,-q,-q;q^2)-qL(q^2,-q,-q;q^2)=\frac{1}{2}q\left(L(q,-q;q^2)^2-L(q^2,-q,-q;q^2)\right).
\end{align*}

At present, \eqref{r:xxyy} is our only method for directly handling a double Lambert series $A(x,y,z,w)$. However, we shall encounter additional techniques in the examples that follow.

\subsection{The Andrews--Dixit--Schultz--Yee Conjecture}

We consider a generalized form of the Andrews--Dixit--Schultz--Yee conjecture, from which we derive an elegant proof of the conjecture.
\begin{example}
\label{r:adsy}
For $|xq|<1$, $|zq^3|<1$, and $z\neq -q^{-r}$ for $r\in\mathbb{N^*}$, we have
\begin{align*}
&\sum_{m,n\geq 1}\frac{(-1)^mq^{2mn+m}x^nz^m}{(1+zq^n)(1-q^{2m-1})}\\
=&xzq^2\sum_{k=0}^\infty \frac{x^kq^{k}}{1+zq^{2k+1}}\sum_{k=0}^\infty \frac{x^kq^{2k}}{1+zq^{2k+1}}
-xzq^2\sum_{k=0}^\infty \frac{q^{k}}{1+zq^{2k+1}}\sum_{k=0}^\infty \frac{x^kq^{k}}{1+zq^{k+1}}.
\end{align*}
\end{example}

\begin{proof}
Fix $N\geq1$ and compare the coefficients of $x^N$ on both sides. For the left-hand side,
\begin{align*}
[x^N]\sum_{m,n\geq 1}\frac{(-1)^mq^{2mn+m}x^nz^m}{(1+zq^n)(1-q^{2m-1})}=\frac{1}{1+zq^N}\sum_{m\geq 1}\frac{(-1)^mq^{m(2N+1)}z^m}{1-q^{2m-1}}.
\end{align*}
For the right-hand side, set
\begin{align*}
A(x)=\sum_{k=0}^\infty \frac{x^kq^{k}}{1+zq^{2k+1}},~B(x)=\sum_{k=0}^\infty \frac{x^kq^{2k}}{1+zq^{2k+1}},~C=\sum_{k=0}^\infty \frac{q^{k}}{1+zq^{2k+1}},~D(x)=\sum_{k=0}^\infty \frac{x^kq^{k}}{1+zq^{k+1}}.
\end{align*}
Then the right-hand side equals $xzq^2(A(x)B(x)-CD(x))$; its $x^N$ coefficient is
\begin{align*}
[x^N]\big(xzq^2(AB-CD)\big)=zq^2\big([x^{N-1}](AB)-C[x^{N-1}]D(x)\big).
\end{align*}
Since $[x^{N-1}]D(x)=\dfrac{q^{N-1}}{1+zq^N}$ and
\begin{align*}
[x^{N-1}](AB)=\sum_{j=0}^{N-1}\left(\frac{q^j}{1+zq^{2j+1}}\cdot\frac{q^{2(N-1-j)}}{1+zq^{2N-1-2j}}\right),
\end{align*}
we obtain
\begin{align*}
[x^N]\text{RHS}=\frac{zq^2}{1+zq^N}\Big((1+zq^N)\sum_{j=0}^{N-1}\frac{q^{2N-2-j}}{(1+zq^{2j+1})(1+zq^{2N-1-2j})}-Cq^{N-1}\Big).
\end{align*}
Thus the desired identity is equivalent, after multiplying by $(1+zq^N)$, to
\begin{align}\label{eq-id}
\sum_{m\geq 1}\frac{(-1)^mq^{m(2N+1)}z^m}{1-q^{2m-1}}=zq^2(1+zq^N)\sum_{j=0}^{N-1}\left(\frac{q^j}{1+zq^{2j+1}}\cdot\frac{q^{2(N-1-j)}}{1+zq^{2N-1-2j}}\right)-zq^{N+1}C.
\end{align}

We denote the finite sum on the right-hand side of \eqref{eq-id} as $F(q)$. Then we have
\begin{align*}
F(q)=&\sum_{j=0}^{N-1}\frac{q^{2N-j}}{q^{2N-1-2j}-q^{2j+1}}\left(\frac{1+zq^N}{1+zq^{2j+1}}-\frac{1+zq^N}{1+zq^{2N-1-2j}}\right)\\
=&\sum_{j=0}^{N-1}\frac{zq^{2N-j}}{q^{2N-1-2j}-q^{2j+1}}\left(\frac{q^N-q^{2j+1}}{1+zq^{2j+1}}-\frac{q^N-q^{2N-1-2j}}{1+zq^{2N-1-2j}}\right).
\end{align*}
Re-indexing the second part $(j\mapsto N-1-j)$ makes it identical to the first, and we obtain
\begin{align*}
F(q)=&\sum_{j=0}^{N-1}\frac{z(q^{2N-j}+q^{N+1+j})(q^N-q^{2j+1})}{(q^{2N-1-2j}-q^{2j+1})(1+zq^{2j+1})}=\sum_{j=0}^{N-1}\frac{zq^{N+3j+2}(q^{N-2j-1}+1)(q^{N-2j-1}-1)}{q^{2j+1}(q^{2N-4j-2}-1)(1+zq^{2j+1})}\\
=&\sum_{j=0}^{N-1}\frac{zq^{N+j+1}}{1+zq^{2j+1}}=zq^{N+1}C-\sum_{j=N}^{\infty}\frac{zq^{N+j+1}}{1+zq^{2j+1}}.
\end{align*}
Thus the right-hand side of \eqref{eq-id} is equal to
\begin{align*}
&-\sum_{j=N}^{\infty}\frac{zq^{N+j+1}}{1+zq^{2j+1}}=-\sum_{j=0}^{\infty}\frac{zq^{2N+j+1}}{1+zq^{2j+2N+1}}=-\sum_{k,j\geq0}zq^{2N+1+j}(-zq^{2j+2N+1})^k\\
=&\sum_{k\geq0}(-z)^{k+1}q^{2Nk+2N+k+1}\sum_{j\geq0}q^{(2k+1)j}=\sum_{k\geq0}\frac{(-z)^{k+1}q^{(2N+1)(k+1)}}{1-q^{2k+1}}=\sum_{k\geq 1}\frac{(-1)^kq^{k(2N+1)}z^k}{1-q^{2k-1}}.
\end{align*}
Therefore, we complete the proof of \eqref{eq-id}.
\end{proof}

Setting $x=z=1$ and simplifying, we obtain the following corollary.
\begin{cor}
\begin{align*}
Y(q)=-q\frac{(q^4;q^4)_\infty^4}{(q^2;q^2)_\infty^2}\sum_{k=1}^\infty \frac{q^{2k}}{1+q^{2k}},
\end{align*}
and therefore $Y(q)$ is an odd function of $q$.
\end{cor}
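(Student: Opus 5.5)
The plan is to specialize Example~\ref{r:adsy} at $x=z=1$ and then evaluate the two resulting single Lambert series in closed form. With $x=z=1$ the left-hand side of Example~\ref{r:adsy} is exactly $Y(q)$. The hypotheses $|q|<1$, $|q^3|<1$ and $1\neq -q^{-r}$ hold, so the specialization is legitimate. In the notation of the proof of Example~\ref{r:adsy} we have $A(1)=C$, so
\begin{align*}
Y(q)=q^2\big(A(1)B(1)-C\,D(1)\big)=q^2\,C\,\big(B(1)-D(1)\big).
\end{align*}
It then remains to identify $C$ and $B(1)-D(1)$ separately.

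For $B(1)-D(1)$ I will split $D(1)=\sum_{k\ge0} q^k/(1+q^{k+1})$ according to the parity of $k$. The even terms $k=2j$ give $q^{2j}/(1+q^{2j+1})$, and these are exactly the terms of $B(1)$, so they cancel. What remains is
\begin{align*}
B(1)-D(1)=-\sum_{j\ge0}\frac{q^{2j+1}}{1+q^{2j+2}}=-q^{-1}\sum_{k\ge1}\frac{q^{2k}}{1+q^{2k}}.
\end{align*}
Substituting this gives $Y(q)=-q\,C\sum_{k\ge1} q^{2k}/(1+q^{2k})$.

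For $C=\sum_{k\ge0} q^k/(1+q^{2k+1})$ I will extend to a bilateral sum and apply Ramanujan's ${}_1\psi_1$ summation \eqref{r:psi11} with base $q^2$, $x=q$ and $y=-q$; the condition $|q^2|<|q|<1$ holds. Substituting $k=-j-1$ gives
\begin{align*}
\frac{q^{-j-1}}{1+q^{-2j-1}}=\frac{q^{j}}{1+q^{2j+1}},
\end{align*}
so the negative-index terms reproduce $C$. Hence
\begin{align*}
2C=L^*(q,-q;q^2)=\frac{(q^2,q^2,-q^2,-1;q^2)_\infty}{(q,q,-q,-q;q^2)_\infty}.
\end{align*}
Using $(-1;q^2)_\infty=2(-q^2;q^2)_\infty$ and $(q;q^2)_\infty(-q;q^2)_\infty=(q^2;q^4)_\infty$, this becomes
\begin{align*}
C=\frac{(q^2;q^2)_\infty^2(-q^2;q^2)_\infty^2}{(q^2;q^4)_\infty^2}=\frac{(q^4;q^4)_\infty^2}{(q^2;q^4)_\infty^2}=\frac{(q^4;q^4)_\infty^4}{(q^2;q^2)_\infty^2}.
\end{align*}
This is the claimed formula for $Y(q)$.

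Oddness then follows at once. Both $C$ and $\sum_{k\ge1} q^{2k}/(1+q^{2k})$ are power series in $q^2$, so $Y(q)$ is $q$ times an even function.

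The only delicate point is the ${}_1\psi_1$ step: the bilateral reflection has to produce exactly $2C$, and the factor $(-1;q^2)_\infty$ has to be handled correctly, since this is where the factor $2$ cancels. Everything else is direct bookkeeping.
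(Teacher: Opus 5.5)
Your proposal is correct and is the paper's own route: specialize Example~\ref{r:adsy} at $x=z=1$, cancel the even-index terms of $D(1)$ against $B(1)$, and evaluate $C=\frac{1}{2}L^*(q,-q;q^2)$ with the ${}_1\psi_1$ summation \eqref{r:psi11}. You have written out the ``simplifying'' steps that the paper leaves implicit, and your bookkeeping checks out, including the reflection $k\mapsto -k-1$ and the factor $(-1;q^2)_\infty=2(-q^2;q^2)_\infty$.
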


%As shown in [??] and [??], Example \ref{r:adsy} can also be derived from the repeated application of Properties \ref{r:prop1} and \ref{r:prop2}, which suggests a way to approach the identity. However, to avoid cumbersome calculations and discussions on convergence, we prove the identity by comparing coefficients.

\subsection{The Amdeberhan--Andrews--Ballantine Conjecture I}
We present a generalized form of Conjecture \ref{AAB-conj-I} through the following example.
\begin{example}
For $|y|<1$, $|q|<|x|<1/|y|$, $z\neq q^{-r}$, $w\neq q^{-r}$, $w\neq0$, and $z\neq wq^{-r-1}$ for $r\in\mathbb{N}$, we have
\begin{align}\label{can-conj1}
&A(x,y,z,w)-\frac{zq}{xw}A\left(xy,\frac{q}{x},\frac{zq}{w},z\right)\\\nonumber
=&L(xy,z,w)+yL\left(y,\frac{zq}{w}\right)L(xy,w)-\frac{zq}{xw}L\left(\frac{q}{x},\frac{zq}{w}\right)L(xy,z).
\end{align}
\end{example}

\begin{proof}
Fix $M,~N\in \mathbb{Z}$ and compare the coefficients of $x^Ny^M$ on both sides. For the left-hand side,
\begin{align*}
[x^Ny^M]A(x,y,z,w)=
\begin{cases}
\dfrac{1}{(1-wq^N)(1-zq^{M})}&M\geq N\geq 0\\
0&\text{otherwise}
\end{cases}
\end{align*}
and
\begin{align*}
[x^Ny^M]\frac{zq}{wx}A\left(xy,\frac{q}{x},\frac{zq}{w},z\right)=
\begin{cases}
\dfrac{zw^{-1}q^{M-N}}{(1-zw^{-1}q^{M-N})(1-zq^{M})}&M\geq0~\text{and}~N\leq -1\\
0&\text{otherwise}.
\end{cases}
\end{align*}
For the right-hand side,
\begin{align*}
[x^Ny^M]L(xy,z,w)=
\begin{cases}
\dfrac{1}{(1-wq^N)(1-zq^{N})}&M=N\geq 0\\
0&\text{otherwise},
\end{cases}
\end{align*}
\begin{align*}
[x^Ny^M]yL\left(y,\frac{zq}{w}\right)L(xy,w)=
\begin{cases}
\dfrac{1}{(1-wq^N)(1-zw^{-1}q^{M-N})}&M\geq N+1~\text{and}~N\geq 0\\
0&\text{otherwise},
\end{cases}
\end{align*}
and
\begin{align*}
[x^Ny^M]\frac{zq}{xw}L\left(\frac{q}{x},\frac{zq}{w}\right)L(xy,z)=
\begin{cases}
\dfrac{zw^{-1}q^{M-N}}{(1-zq^M)(1-zw^{-1}q^{M-N})}&M\geq N+1~\text{and}~M\geq 0\\
0&\text{otherwise}.
\end{cases}
\end{align*}

We can easily observe that the coefficients of $x^Ny^M$ on both sides are not equal to 0 only when $M\geq0$ and $M\geq N$. Therefore, we will discuss the coefficients in the following several cases.

Case 1 $(N\geq0)$: When $M=N\geq0$, we obtain
\begin{align*}
[x^Ny^M]\text{LHS}=\frac{1}{(1-wq^N)(1-zq^{N})}=[x^Ny^M]\text{RHS}.
\end{align*}
When $M\geq N+1$ and $M,N\geq0$, we obtain
\begin{align*}
[x^Ny^M]\text{RHS}=&\frac{1}{(1-wq^N)(1-zw^{-1}q^{M-N})}-\frac{zw^{-1}q^{M-N}}{(1-zq^M)(1-zw^{-1}q^{M-N})}\\
=&\frac{(1-zq^M)-zw^{-1}q^{M-N}(1-wq^N)}{(1-wq^N)(1-zq^M)(1-zw^{-1}q^{M-N})}\\
=&\frac{1-zq^M-zw^{-1}q^{M-N}+zq^M}{(1-wq^N)(1-zq^M)(1-zw^{-1}q^{M-N})}\\
=&\frac{1}{(1-wq^N)(1-zq^{N})}=[x^Ny^M]\text{LHS}.
\end{align*}

Case 2 $(N<0)$: Since the global condition requires $M\geq0$ and our current case postulates $N<0$, it strictly follows that the relation $M>N$ is inherently satisfied. Consequently, we obtain
\begin{align*}
[x^Ny^M]\text{LHS}=-\frac{zw^{-1}q^{M-N}}{(1-zq^M)(1-zw^{-1}q^{M-N})}=[x^Ny^M]\text{RHS}.
\end{align*}
Therefore, the coefficients of $x^Ny^M$ on both sides of the identity for all $M,N\in\mathbb{Z}$ are equal.
\end{proof}

Setting $xy=q$ and replacing both $z$ and $w$ with $zq$, we obtain the following corollary as a generalized form of Conjecture \ref{AAB-conj-I}.
\begin{cor}\label{g:AAB-conj-I}
For $|x|>|q|$, $|zq|<1$ and each positive integer $n$, we have
\begin{align*}
[q^n]\sum_{n\geq0}\sum_{m\geq n}\left(\frac{zx^{n-m}q^{m+1}}{(1-zq^{m+1})(1-zq^{n+1})}-\frac{zx^{-m-1}q^{n+m+2}}{(1-q^{m+1})(1-zq^{n+1})}\right)=\sum_{d|n}dz^d.
\end{align*}
\end{cor}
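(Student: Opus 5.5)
The plan is to specialize the identity \eqref{can-conj1} at $y=q/x$ and then replace both $z$ and $w$ by $zq$. The key observation is that with $z=w$ the ratio $zq/w$ becomes simply $q$. Two of the three right-hand side terms then cancel, and a single Lambert series is left.

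First I check the hypotheses of the preceding Example. The conditions $|y|<1$ and $|q|<|x|<1/|y|$ become $|q|<|x|<1$ (since $1/|y|=|x|/|q|>|x|$ automatically). The conditions $z\neq q^{-r}$ and $w\neq q^{-r}$ hold for $|zq|<1$. Finally, $z\neq wq^{-r-1}$ becomes $1\neq q^{-r-1}$, which is automatic.

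Under the substitution $(x,y,z,w)\mapsto(x,q/x,zq,zq)$ the left side of \eqref{can-conj1} becomes
\[
A(x,q/x,zq,zq)-\frac{q}{x}A(q,q/x,q,zq).
\]
On the right, $L(xy,z,w)$ becomes $L(q,zq,zq)$. The middle term becomes $\frac{q}{x}L(q/x,q)L(q,zq)$. The last term becomes $\frac{q}{x}L(q/x,q)L(q,zq)$ as well, so the two cancel exactly. Hence
\[
A(x,q/x,zq,zq)-\frac{q}{x}A(q,q/x,q,zq)=\sum_{n\ge0}\frac{q^n}{(1-zq^{n+1})^2}.
\]

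Next I write out the two $A$'s from the definition:
\[
A(x,q/x,zq,zq)=\sum_{n\ge0}\sum_{m\ge n}\frac{x^{n-m}q^m}{(1-zq^{n+1})(1-zq^{m+1})},
\]
\[
\frac{q}{x}A(q,q/x,q,zq)=\sum_{n\ge0}\sum_{m\ge n}\frac{x^{-m-1}q^{n+m+1}}{(1-zq^{n+1})(1-q^{m+1})}.
\]
Multiplying the whole identity by $zq$ reproduces exactly the double sum in the corollary. Thus that double sum equals
\[
\sum_{n\ge0}\frac{zq^{n+1}}{(1-zq^{n+1})^2}=\sum_{k\ge1}\frac{zq^k}{(1-zq^k)^2}.
\]
In particular, the dependence on $x$ disappears.

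Finally, I expand $\frac{u}{(1-u)^2}=\sum_{d\ge1}du^d$ with $u=zq^k$, giving
\[
\sum_{k,d\ge1}dz^dq^{dk}.
\]
Reading off the coefficient of $q^n$ gives $\sum_{d\mid n}dz^d$.

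The only delicate point is bookkeeping rather than a genuine obstacle. The statement asserts $|x|>|q|$ but not $|x|<1$, whereas the Example was proved by coefficient comparison assuming $|x|<1$. I would first establish the identity for $|q|<|x|<1$. I would then note that the left side is analytic in $x$ on the region where the double series converges, while the right side is independent of $x$. The identity, and hence the coefficient statement, therefore extends by analytic continuation; equivalently, one can simply restrict to $|q|<|x|<1$.
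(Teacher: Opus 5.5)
Your proposal is correct and follows the paper's proof: specialize \eqref{can-conj1} at $y=q/x$ and $z=w\mapsto zq$, so that after multiplying by $zq$ the left side is $zqA(x,q/x,zq,zq)-\frac{zq^2}{x}A(q,q/x,q,zq)$, the two product terms on the right cancel, and $zqL(q,zq,zq)=\sum_{k\ge1}zq^k/(1-zq^k)^2$ expands to give $\sum_{d\mid n}dz^d$. Your closing concern about $|x|<1$ is unnecessary, since under $y=q/x$ the Example's hypotheses $|y|<1$ and $|q|<|x|<1/|y|$ reduce exactly to $|x|>|q|$ (and the condition $w=zq\neq0$, which you omitted, only rules out the trivial case $z=0$); your analytic-continuation remark does no harm, though.
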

\begin{proof}
We can observe that the series on the left-hand side is
\begin{align*}
K(q):=zqA\left(x,\frac{q}{x},zq,zq\right)-\frac{zq^2}{x}A\left(q,\frac{q}{x},q,zq\right).
\end{align*}
From \eqref{can-conj1}, we have
\begin{align*}
K(q)=&zqL(q,zq,zq)=\sum_{n\geq0}\frac{zq^{n+1}}{(1-zq^{n+1})^2}=\sum_{n,k\geq0}(k+1)z^{k+1}q^{nk+n+k+1}\\
=&\sum_{n,k\geq1}kz^kq^{nk}=\sum_{n=1}^\infty\Big(\sum_{d|n}dz^d\Big)q^n.
\end{align*}
\end{proof}

We note that Corollary \ref{g:AAB-conj-I} can yield Conjecture \ref{AAB-conj-I} immediately. It would be more desirable, however, to have the left-hand side of Corollary \ref{g:AAB-conj-I} written in a form which is the coefficients of $q^{n2^a}$, as in Conjecture \ref{AAB-conj-I}. Next, we show how to obtain Conjecture \ref{AAB-conj-I} by Corollary \ref{g:AAB-conj-I}.

First, rearranging and using properties of geometric series, we obtain
\begin{align*}
&\sum_{m,n\geq 1}\frac{q^{mn2^a}}{(1+q^{n2^{a-1}})(1-q^{2m-1})}=\sum_{m,n\geq 1}\frac{1}{1+q^{n2^{a-1}}}\sum_{k\geq 0}q^{mn2^a+(2m-1)k}\\
=&\sum_{r=0}^{2^{a-1}-1}\sum_{m,n\geq 1}\frac{1}{1+q^{n2^{a-1}}}\sum_{k\geq 0}q^{mn2^a+(2m-1)(2^{a-1}k+r)}\\
=&\sum_{r=0}^{2^{a-1}-1}\sum_{m,n\geq 1}\frac{1}{1+q^{n2^{a-1}}}\sum_{k\geq 0}q^{mn2^a+mk2^a-k2^{a-1}+(2m-1)r}
\end{align*}
We can observe that the power of $q$ in each term of the above series is congruent to $(2m-1)r$ modulo $2^{a-1}$. Therefore, we obtain
\begin{align*}
&[q^{n2^a}]\sum_{m,n\geq 1}\frac{q^{mn2^a}}{(1+q^{n2^{a-1}})(1-q^{2m-1})}=[q^{n2^a}]\sum_{m,n\geq 1}\frac{1}{1+q^{n2^{a-1}}}\sum_{k\geq 0}q^{mn2^a+mk2^a-k2^{a-1}}\\
=&[q^{n2^a}]\sum_{n\geq 1}\sum_{k\geq 0}\left(\frac{q^{-k2^{a-1}}}{1+q^{n2^{a-1}}}\cdot\frac{q^{n2^a+k2^a}}{1-q^{n2^a+k2^a}}\right)=[q^{2n}]\sum_{n,k\geq0}\frac{q^{2n+k+2}}{(1+q^{n+1})(1-q^{2n+2k+2})}\\
=&[q^{2n}]\left(\sum_{n,k\geq0}\frac{q^{2n+k+2}}{(1-q^{2n+2})(1-q^{2n+2k+2})}-\sum_{n,k\geq0}\frac{q^{3n+k+3}}{(1-q^{2n+2})(1-q^{2n+2k+2})}\right)\\
=&[q^{2n}]\left(\sum_{n\geq0}\sum_{k\geq n}\frac{q^{n+k+2}}{(1-q^{2n+2})(1-q^{2k+2})}-\sum_{n\geq0}\sum_{k\geq n}\frac{q^{2n+k+3}}{(1-q^{2n+2})(1-q^{2k+2})}\right).
\end{align*}
Thus Conjecture \ref{AAB-conj-I} is equivalent to
\begin{align}\label{A-A=L}
q^2A(q,q,q^2,q^2;q^2)-q^3A(q^2,q,q^2,q^2;q^2)=\sum_{n=1}^\infty\sigma_1(n)q^{2n}.
\end{align}
Then replacing $q$ with $q^2$ and setting $x=q$, $z=1$ in Corollary \ref{g:AAB-conj-I}, we can obtain \eqref{A-A=L}.

\subsection{The Amdeberhan--Andrews--Ballantine Conjecture II}
We find the following formula to handle the double Lambert series, such as Conjecture \ref{AAB-conj-II}.
\begin{example}
For $|q|<|x|,|y|,|z|,|w|<1$, $|xy|>|q|$, and $w\neq zq^{-r}$ for $r\in\mathbb{N}$, we have
\begin{align}\label{r:prop3}
&A(x,y,zq,w)-\frac{q^2}{xy^2wz}A\left(\frac{q}{x},\frac{q}{y},\frac{q}{z},\frac{q}{w}\right)\\\nonumber
=&\frac{1}{y}L(x,w)L^*(y,z)-\frac{1}{y}L\left(x,\frac{w}{z}\right)L^*(xy,z)+\frac{w}{yz}L\left(\frac{q}{y},\frac{w}{z}\right)L^*(xy,w).
\end{align}
\end{example}
\begin{proof}
Fix $M,N\in\mathbb{Z}$ and compare the coefficients of $x^Ny^M$ on both sides. For the left-hand side,
\begin{align*}
[x^Ny^M]A(x,y,zq,w)=
\begin{cases}
\dfrac{1}{(1-wq^N)(1-zq^{M+1})}&M\geq N\geq 0\\
0&\text{otherwise}
\end{cases}
\end{align*}
and
\begin{align*}
[x^Ny^M]\frac{q^2}{xy^2wz}A\left(\frac{q}{x},\frac{q}{y},\frac{q}{z},\frac{q}{w}\right)=
\begin{cases}
\dfrac{q^{-M-N-1}}{wz(1-w^{-1}q^{-N})(1-z^{-1}q^{-M-1})}&M+1\leq N\leq -1\\
0&\text{otherwise}.
\end{cases}
\end{align*}
For the right-hand side,
\begin{align*}
[x^Ny^M]\frac{1}{y}L(x,w)L^*(y,z)=
\begin{cases}
\dfrac{1}{(1-wq^N)(1-zq^{M+1})}&N\geq 0\\
0&\text{otherwise},
\end{cases}
\end{align*}
\begin{align*}
[x^Ny^M]\frac{1}{y}L\left(x,\frac{w}{z}\right)L^*(xy,z)=[x^N]\frac{x^{M+1}}{1-zq^{M+1}}\sum_{n=0}^\infty\frac{x^n}{1-wz^{-1}q^n}\\
=\begin{cases}
\dfrac{1}{(1-wz^{-1}q^{N-M-1})(1-zq^{M+1})}&N\geq M+1\\
0&\text{otherwise},
\end{cases}
\end{align*}
and
\begin{align*}
[x^Ny^M]\frac{w}{yz}L\left(\frac{q}{y},\frac{w}{z}\right)L^*(xy,w)=[y^{M+1}]\frac{wz^{-1}y^N}{1-wq^N}\sum_{n=0}^\infty\frac{q^ny^{-n}}{1-wz^{-1}q^n}\\
=\begin{cases}
\dfrac{wq^{N-M-1}}{z(1-wz^{-1}q^{N-M-1})(1-wq^N)}&N\geq M+1\\
0&\text{otherwise}.
\end{cases}
\end{align*}

Case 1 $(M\geq N)$: When $M\geq N\geq 0$, we obtain
\begin{align*}
[x^Ny^M]\text{LHS}=[x^Ny^M]\text{RHS}=\frac{1}{(1-wq^N)(1-zq^{M+1})}.
\end{align*}
When $M\geq N$ and $N<0$, we obtain $[x^Ny^M]\text{LHS}=[x^Ny^M]\text{RHS}=0$.

Case 2 $(M+1\leq N)$: When $M+1\leq N\leq -1$, we obtain
\begin{align*}
[x^Ny^M]\text{LHS}=-\frac{q^{-M-N-1}}{wz(1-w^{-1}q^{-N})(1-z^{-1}q^{-M-1})}=-\frac{1}{(1-wq^N)(1-zq^{M+1})}
\end{align*}
and
\begin{align*}
[x^Ny^M]\text{RHS}=&-\frac{1}{(1-wz^{-1}q^{N-M-1})(1-zq^{M+1})}+\frac{wq^{N-M-1}}{z(1-wz^{-1}q^{N-M-1})(1-wq^N)}\\
=&\frac{-(1-wq^N)+wz^{-1}q^{N-M-1}(1-zq^{M+1})}{(1-wz^{-1}q^{N-M-1})(1-wq^N)(1-zq^{M+1})}\\
=&-\frac{1}{(1-wq^N)(1-zq^{M+1})}=[x^Ny^M]\text{LHS}.
\end{align*}
When $M+1\leq N$ and $N\geq 0$, we obtain
\begin{align*}
&[x^Ny^M]\text{RHS}\\
=&\dfrac{1}{(1-wq^N)(1-zq^{M+1})}-\dfrac{1}{(1-wz^{-1}q^{N-M-1})(1-zq^{M+1})}+\dfrac{wq^{N-M-1}}{z(1-wz^{-1}q^{N-M-1})(1-wq^N)}\\
=&\frac{1}{(1-wq^N)(1-zq^{M+1})}-\frac{1}{(1-wq^N)(1-zq^{M+1})}=0=[x^Ny^M]\text{LHS}.
\end{align*}
Therefore, the coefficients of $x^Ny^M$ on both sides of the identity for all $M,N\in\mathbb{Z}$ are equal.
\end{proof}

Next, we will demonstrate how to use the above example to obtain Conjecture \ref{AAB-conj-II}. We define the following auxiliary function
\begin{align*}
X(q):=\sum_{n\geq2}\frac{(n-1)q^{n-2}}{1+q^{2n-1}}.
\end{align*}
Then we can easily observe that the series on the left-hand side of the identity is an even function and the series on the right-hand side of the identity is $q^2X(q)$. Rearranging and using properties of geometric series, we obtain
\begin{align*}
&\sum_{m,n\geq1}\frac{q^{2mn}}{(1+q^{2n-1})(1-q^{2m-1})}=\sum_{m,n\geq1}\frac{q^{2mn}}{1+q^{2n-1}}\sum_{i\geq0}q^{2mi-i}\\
=&\sum_{n\geq1}\sum_{i\geq0}\frac{q^{-i}}{1+q^{2n-1}}\sum_{m\geq1}q^{2mn+2mi}=\sum_{n\geq1}\sum_{i\geq0}\left(\frac{q^{-i}}{1+q^{2n-1}}\cdot\frac{q^{2n+2i}}{1-q^{2n+2i}}\right)\\
=&\sum_{n,i\geq0}\frac{q^{2n+i+2}}{(1+q^{2n+1})(1-q^{2n+2i+2})}=\sum_{n\geq0}\sum_{i\geq n}\frac{q^{n+i+2}}{(1+q^{2n+1})(1-q^{2i+2})}\\
=&q^2A(q,q,q^2,-q;q^2).
\end{align*}
Thus Conjecture \ref{AAB-conj-II} is equivalent to
\begin{align}\label{2A=L+L}
2A(q,q,q^2,-q;q^2)=X(q)+X(-q).
\end{align}

First, replacing $q$ with $q^2$ and setting $w=-q$, $y=q$ in \eqref{r:prop3}, we obtain
\begin{align}\label{prop3-step1}
&A(x,q,zq^2,-q;q^2)+\frac{q}{xz}A\left(\frac{q^2}{x},q,\frac{q^2}{z},-q;q^2\right)\\\nonumber
=&\frac{1}{q}L(x,-q;q^2)L^*(q,z;q^2)-\frac{1}{q}L\left(x,\frac{-q}{z};q^2\right)L^*(xq,z;q^2)-\frac{1}{z}L\left(q,\frac{-q}{z};q^2\right)L^*(xq,-q;q^2).
\end{align}
For the last two items on the right-hand side of \eqref{prop3-step1}, we have
\begin{align*}
&\frac{1}{q}L\left(x,\frac{-q}{z};q^2\right)L^*(xq,z;q^2)+\frac{1}{z}L\left(q,\frac{-q}{z};q^2\right)L^*(xq,-q;q^2)\\
=&\frac{1}{q}L\left(x,\frac{-q}{z};q^2\right)L^*(xq,z;q^2)-\frac{1}{q}L\left(q,\frac{-q}{z};q^2\right)L^*(xq,z;q^2)\\
&\qquad+\frac{1}{q}L\left(q,\frac{-q}{z};q^2\right)L^*(xq,z;q^2)+\frac{1}{z}L\left(q,\frac{-q}{z};q^2\right)L^*(xq,-q;q^2)
\end{align*}
We note that
\begin{align*}
&\lim_{x\rightarrow q}\frac{1}{q}L^*(xq,z;q^2)\left[L\left(x,\frac{-q}{z};q^2\right)-L\left(q,\frac{-q}{z};q^2\right)\right]\\
=&\lim_{x\rightarrow q}\frac{x-q}{q}L^*(xq,z;q^2)\cdot\lim_{x\rightarrow q}\dfrac{L(x,-q/z;q^2)-L(q,-q/z;q^2)}{x-q}\\
=&\lim_{x\rightarrow q}\frac{x-q}{q}\frac{(q^2,q^2,xzq,q/xz;q^2)_\infty}{(xq,q/x,z,q^2/z;q^2)_\infty}\cdot\left.\frac{d}{dx}L(x,-q/z;q^2)\right|_{x=q}\\
=&\lim_{x\rightarrow q}\frac{x-q}{q}\frac{(q^2,q^2,xzq,q/xz;q^2)_\infty}{(1-q/x)\cdot(xq,q^3/x,z,q^2/z;q^2)_\infty}\cdot\sum_{n=1}^\infty\frac{nq^{n-1}}{1+q^{2n+1}/z}\\
=&-\frac{1}{z}\sum_{n=1}^\infty\frac{nq^{n-1}}{1+q^{2n+1}/z}
\end{align*}
Moreover, we have
\begin{align*}
&\frac{1}{q}L^*(xq,z;q^2)+\frac{1}{z}L^*(xq,-q;q^2)\\
=&\frac{1}{q}\sum_{n=0}^\infty\frac{x^nq^n}{1-zq^{2n}}+\frac{1}{q}\sum_{n=1}^\infty\frac{x^{-n}q^n}{q^{2n}-z}+\frac{1}{z}\sum_{n=0}^\infty\frac{x^nq^n}{1+q^{2n+1}}+\frac{1}{z}\sum_{n=1}^\infty\frac{x^{-n}q^{n-1}}{1+q^{2n-1}}
\end{align*}
For the second and fourth items, we first find the common denominator and then set $x\rightarrow q$ to obtain
\begin{align*}
&\lim_{x\rightarrow q}\left[\frac{1}{q}\sum_{n=1}^\infty\frac{x^{-n}q^n}{q^{2n}-z}+\frac{1}{z}\sum_{n=1}^\infty\frac{x^{-n}q^{n-1}}{1+q^{2n-1}}\right]=\lim_{x\rightarrow q}\frac{1}{q}\sum_{n=1}^\infty\frac{x^{-n}q^n(zq^{2n-1}+q^{2n})}{z(q^{2n}-z)(1+q^{2n-1})}\\
=&\lim_{x\rightarrow q}\frac{z+q}{zq}\sum_{n=1}^\infty\frac{x^{-n}q^nq^{2n-1}}{(q^{2n}-z)(1+q^{2n-1})}=\frac{z+q}{zq}\sum_{n=1}^\infty\frac{q^{2n-1}}{(q^{2n}-z)(1+q^{2n-1})}
\end{align*}
Then setting $x\rightarrow q$ in \eqref{prop3-step1}, we have
\begin{align}\label{prop3-step2}
&A(q,q,zq^2,-q;q^2)+\frac{1}{z}A\left(q,q,\frac{q^2}{z},-q;q^2\right)=\frac{1}{q}L(q,-q;q^2)L^*(q,z;q^2)+\frac{1}{z}\sum_{n=1}^\infty\frac{nq^{n-1}}{1+q^{2n+1}/z}\\\nonumber
&\qquad-L\left(q,\frac{-q}{z};q^2\right)\left[\frac{1}{q}\sum_{n=0}^\infty\frac{q^{2n}}{1-zq^{2n}}+\frac{1}{z}\sum_{n=0}^\infty\frac{q^{2n}}{1+q^{2n+1}}+\frac{z+q}{zq}\sum_{n=1}^\infty\frac{q^{2n-1}}{(q^{2n}-z)(1+q^{2n-1})}\right]
\end{align}
We focus on the first and third items on the right-hand side of \eqref{prop3-step2}
\begin{align*}
&\lim_{z\rightarrow 1}\left[L(q,-q;q^2)L^*(q,z;q^2)-L\left(q,\frac{-q}{z};q^2\right)\sum_{n=0}^\infty\frac{q^{2n}}{1-zq^{2n}}\right]\\
=&\lim_{z\rightarrow 1}\frac{L(q,-q;q^2)-L(q,-q/z;q^2)}{1-z}+L(q,-q;q^2)\left(\sum_{n\in\mathbb{Z},n\neq0}\frac{q^n}{1-q^{2n}}-\sum_{n\geq1}\frac{q^{2n}}{1-q^{2n}}\right)\\
=&\left.\frac{d}{dz}L(q,-q/z;q^2)\right|_{z=1}-L(q,-q;q^2)\sum_{n\geq1}\frac{q^{2n}}{1-q^{2n}}\\
=&\sum_{n=0}^\infty\frac{q^{3n+1}}{(1+q^{2n+1})^2}-L(q,-q;q^2)\sum_{n\geq1}\frac{q^{2n}}{1-q^{2n}}.
\end{align*}
Thus setting $z\rightarrow 1$ in \eqref{prop3-step2}, we have
\begin{align*}
&2A(q,q,q^2,-q;q^2)=\sum_{n=1}^\infty\frac{nq^{n-1}}{1+q^{2n+1}}+\sum_{n=0}^\infty\frac{q^{3n}}{(1+q^{2n+1})^2}\\
&\qquad+L(q,-q;q^2)\left[-\sum_{n\geq1}\frac{q^{2n-1}}{1-q^{2n}}-\sum_{n\geq0}\frac{q^{2n}}{1+q^{2n+1}}+\frac{1+q}{q}\sum_{n=1}^\infty\frac{q^{2n-1}}{(1-q^{2n})(1+q^{2n-1})}\right].
\end{align*}
For the auxiliary function $X(q)$, we obtain
\begin{align*}
X(q)=&\sum_{n\geq1}\frac{nq^{n-1}}{1+q^{2n+1}}=\sum_{n\geq1}\sum_{m\geq0}(-1)^mnq^{2mn+m+n-1}\\
=&\sum_{m\geq0}\frac{(-1)^mq^{m-1+2m+1}}{(1-q^{2m+1})^2}=\sum_{n=0}^\infty\frac{(-1)^nq^{3n}}{(1-q^{2n+1})^2}.
\end{align*}
By finding the common denominator, we obtain
\begin{align*}
-\sum_{n\geq1}\frac{q^{2n-1}}{1-q^{2n}}-\sum_{n\geq0}\frac{q^{2n}}{1+q^{2n+1}}+\frac{1+q}{q}\sum_{n=1}^\infty\frac{q^{2n-1}}{(1-q^{2n})(1+q^{2n-1})}=0.
\end{align*}
Then we can complete the proof of \eqref{2A=L+L}.

\subsection{A result of Amdeberhan, Andrews, and Ballantine}
We present the following lemma to demonstrate a new proof of a result of Amdeberhan, Andrews, and Ballantine \cite[Theorem 3.4]{amanba26}.

\begin{lemma}
For $|x|<1$, we have
\begin{align}\label{f3=A'}
xA(x,q,q^2,q)=&\frac{x^2}{2q^2}[L(q^2,x,x)-L(x,q)^2]+\frac{x}{q}[L(x,q)L(q,q)-L(xq,q,q)].
\end{align}
\end{lemma}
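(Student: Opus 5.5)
The plan is to compare coefficients of powers of $x$ on both sides of \eqref{f3=A'}, as in the proofs of \eqref{r:xxyy} and Example \ref{r:adsy}. On the left,
\[
A(x,q,q^2,q)=\sum_{n\ge0}\frac{x^n}{1-q^{n+1}}\sum_{m\ge n}\frac{q^m}{1-q^{m+2}}.
\]
Hence for $N\ge1$,
\[
[x^{N}]\,xA(x,q,q^2,q)=\frac{1}{1-q^{N}}\sum_{m\ge N-1}\frac{q^m}{1-q^{m+2}}.
\]
On the right I expand each single Lambert series as a power series in $x$:
\begin{itemize}
\item $L(x,q)=\sum_n x^n/(1-q^{n+1})$;
\item $L(xq,q,q)=\sum_n x^nq^n/(1-q^{n+1})^2$;
\item $L(q,q)=\sum_k q^k/(1-q^{k+1})$, which is a constant;
\item $L(q^2,x,x)=\sum_k q^{2k}(1-xq^k)^{-2}=\sum_{j\ge0}(j+1)x^j/(1-q^{j+2})$, obtained by expanding $(1-xq^k)^{-2}$ binomially and summing over $k$ first.
\end{itemize}
Here the roles of $x$ and $q$ are swapped relative to the other terms.

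Next I will write out $[x^N]$ of the right-hand side. The first bracket contributes
\[
\frac{1}{2q^2}\Big(\frac{N-1}{1-q^{N}}-\sum_{a+b=N-2}\frac{1}{(1-q^{a+1})(1-q^{b+1})}\Big).
\]
The second contributes
\[
\frac1q\Big(\frac{L(q,q)}{1-q^{N}}-\frac{q^{N-1}}{(1-q^{N})^2}\Big).
\]
On the left, I split the tail sum as $\sum_{m\ge N-1}=\sum_{m\ge0}-\sum_{m=0}^{N-2}$. The full sum $\sum_{m\ge0}q^m/(1-q^{m+2})$ equals $q^{-1}(L(q,q)-1/(1-q))$ after the shift $m\to m-1$. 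This reproduces the $L(q,q)/(q(1-q^N))$ term on the right, and everything that remains is a finite identity in $q$ for each fixed $N$.

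The main step, and the obstacle, is evaluating the convolution $\sum_{a+b=N-2}\frac{1}{(1-q^{a+1})(1-q^{b+1})}$. Since $q^{a+1}q^{b+1}=q^N$ is constant along the sum, I will use the partial fraction
\[
\frac{1}{(1-u)(1-v)}=\frac{1}{1-uv}\Big(\frac{1}{1-u}+\frac{1}{1-v}-1\Big),\qquad uv=q^N .
\]
By the symmetry $a\leftrightarrow b$, this gives
\[
\sum_{a+b=N-2}\frac{1}{(1-q^{a+1})(1-q^{b+1})}=\frac{1}{1-q^N}\Big(2\sum_{k=1}^{N-1}\frac{1}{1-q^k}-(N-1)\Big).
\]
Substituting this, the $(N-1)$ terms cancel, and the identity reduces to
\[
\sum_{m=0}^{N-2}\frac{q^{m}}{1-q^{m+2}}\overset{?}{=}\frac{1}{q^2}\sum_{k=1}^{N-1}\frac{1}{1-q^k}-\frac1q\cdot\frac1{1-q}+\frac{q^{N-2}}{1-q^N}+(\text{explicit constants}).
\]
Writing $q^m/(1-q^{m+2})=q^{-2}\big(1/(1-q^{m+2})-1\big)$ turns the left side into telescoping-compatible finite sums of $1/(1-q^k)$. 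I expect matching the boundary terms ($k=1$, $k=N$, and the constants) to require careful bookkeeping, but no new ideas.

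Finally, I will check the case $N=0$ separately (both sides have zero constant term) and the small case $N=1$ directly. The hypothesis $|x|<1$ only ensures that all the expansions used converge absolutely, which justifies comparing coefficients.
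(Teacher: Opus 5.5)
Your proposal is correct and follows essentially the same route as the paper. Both compare coefficients of $x^N$, evaluate the convolution by partial fractions (your $\frac{1}{1-uv}\bigl(\frac{1}{1-u}+\frac{1}{1-v}-1\bigr)$ is the paper's $\frac{1-q^N}{(1-q^i)(1-q^{N-i})}=\frac{1}{1-q^{N-i}}+\frac{q^i}{1-q^i}$ in another form), and finish by telescoping. One bookkeeping slip: after multiplying by $1-q^N$, the $(N-1)$ terms do not cancel but combine to $\frac{N-1}{q^2}$, so your ``explicit constants'' equal $-\frac{N-1}{q^2}$; this cancels the $-\frac{N-1}{q^2}$ produced by writing $\frac{q^m}{1-q^{m+2}}=q^{-2}\bigl(\frac{1}{1-q^{m+2}}-1\bigr)$, and the remaining boundary terms match exactly, uniformly for all $N\ge1$.
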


\begin{proof}
Fix $N\geq1$ and compare the coefficients of $x^N$ on both sides. For the left-hand side,
\begin{align*}
[x^N]xA(x,q,q^2,q)=[x^N]\sum_{n=0}^\infty\sum_{m\geq n}\frac{x^{n+1}q^m}{(1-q^{n+1})(1-q^{m+2})}=\frac{1}{1-q^N}\sum_{m\geq N-1}\frac{q^m}{1-q^{m+2}}.
\end{align*}
For the right-hand side, since
\begin{align}\label{Lq^2xx}
L(q^2,x,x)=\sum_{n\geq0}\frac{q^{2n}}{(1-xq^n)^2}=\sum_{k,n\geq0}(k+1)x^kq^{nk+2n}=\sum_{k\geq0}\frac{(k+1)x^k}{1-q^{k+2}},
\end{align}
we obtain
\begin{align*}
[x^N]\frac{x^2}{2q^2}L(q^2,x,x)=
\begin{cases}
\dfrac{N-1}{2q^2(1-q^N)}&N\geq 2\\
0&N=1.
\end{cases}
\end{align*}
For each remaining terms on the right-hand side, we have
\begin{align*}
[x^N]\frac{-x^2}{2q^2}L(x,q)^2=
\begin{cases}
-\displaystyle\sum_{i=0}^{N-2}\dfrac{1}{2q^2(1-q^{i+1})(1-q^{N-i-1})}&N\geq 2\\
0&N=1,
\end{cases}
\end{align*}
\begin{align*}
[x^N]\frac{x}{q}L(x,q)L(q,q)=\frac{1}{q(1-q^N)}L(q,q)~~\text{for}~~N\geq1,
\end{align*}
and
\begin{align*}
[x^N]-\frac{x}{q}L(xq,q,q)=-\frac{q^{N-1}}{q(1-q^N)^2}~~\text{for}~~N\geq1.
\end{align*}
When $N=1$, we obtain
\begin{align*}
&[x^N]\text{LHS}=\frac{1}{1-q}\sum_{m\geq0}\frac{q^m}{1-q^{m+2}}=\frac{1}{q(1-q)}\sum_{m\geq1}\frac{q^m}{1-q^{m+1}}\\
=&\frac{1}{q(1-q)}L(q,q)-\frac{1}{q(1-q)^2}=[x^N]\text{RHS}.
\end{align*}
When $N\geq 2$, the desired identity is equivalent, after multiplying by $(1-q^N)$, to
\begin{align}\label{eq-id2}
\sum_{m\geq N-1}\frac{q^m}{1-q^{m+2}}=\frac{N-1}{2q^2}-\sum_{i=1}^{N-1}\frac{1-q^N}{2q^2(1-q^{i})(1-q^{N-i})}+\frac{1}{q}L(q,q)-\frac{q^{N-2}}{1-q^N}.
\end{align}

We denote the finite sum on the right-hand side of \eqref{eq-id2} as $G(q)$. Then we have
\begin{align*}
&\sum_{i=1}^{N-1}\frac{1-q^N}{(1-q^{i})(1-q^{N-i})}=\sum_{i=1}^{N-1}\left(\frac{1}{1-q^{N-i}}+\frac{q^i}{1-q^i}\right)\\
=&\sum_{i=1}^{N-1}\left(\frac{1}{1-q^{i}}+\frac{q^i}{1-q^i}\right)=N-1+\sum_{i=1}^{N-1}\frac{2q^i}{1-q^i}.
\end{align*}
Here we re-index the first part $(j\mapsto N-j)$ for the penultimate equality. Then the right-hand side of \eqref{eq-id2} is equal to
\begin{align*}
&-\sum_{i=1}^{N-1}\frac{q^{i-2}}{1-q^i}+\frac{1}{q}L(q,q)-\frac{q^{N-2}}{1-q^N}=\sum_{i\geq0}\frac{q^{i-1}}{1-q^{i+1}}-\sum_{i=1}^N\frac{q^{i-2}}{1-q^i}\\
=&\sum_{i\geq0}\frac{q^{i-1}}{1-q^{i+1}}-\sum_{i=0}^{N-1}\frac{q^{i-1}}{1-q^{i+1}}=\sum_{i\geq N}\frac{q^{i-1}}{1-q^{i+1}}=\sum_{i\geq N-1}\frac{q^i}{1-q^{i+2}}.
\end{align*}
Thus we complete the proof of \eqref{eq-id2}.
\end{proof}

Amdeberhan, Andrews, and Ballantine \cite{amanba26} proved the following identity connecting the general and double Lambert series.

\begin{example}\cite[Theorem 3.4]{amanba26}\label{f1=f3}
We denote the following two series:
\begin{align*}
f_1(q):=\sum_{k\geq1}\left(\frac{k(k-1)q^k}{1-q^k}-\frac{2kq^{2k}}{(1-q^k)^2}\right)
\end{align*}
and
\begin{align*}
f_3(q):=\sum_{k\geq1}\sum_{l>k}\frac{2kq^{k+l}}{(1-q^k)(1-q^l)}.
\end{align*}
Then we have $f_1(q)=f_3(q)$.
\end{example}

We observe that $f_1(q)$ is related to the divisor function $\sigma_2(n)$ and $f_3(q)$ can be obtained from the derivative of the double Lambert series $A(x,y,z,w)$. Thus we obtain the following proof.
\begin{proof}
We take the derivative of both sides of \eqref{f3=A'} with respect to the variable $x$. For the left-hand side, we obtain
\begin{align*}
\frac{d}{dx}xA(x,q,q^2,q)=\frac{d}{dx}\sum_{n\geq1}\sum_{m>n}\frac{x^nq^{m-2}}{(1-q^n)(1-q^m)}=\sum_{n\geq1}\sum_{m>n}\frac{nx^{n-1}q^{m-2}}{(1-q^n)(1-q^m)}.
\end{align*}
For the right-hand side, from \eqref{Lq^2xx}, we obtain
\begin{align*}
\frac{d}{dx}\frac{x^2}{2q^2}L(q^2,x,x)=\frac{d}{dx}\frac{1}{2q^2}\sum_{k\geq0}\frac{(k+1)x^{k+2}}{1-q^{k+2}}=\frac{1}{2q^2}\sum_{k\geq0}\frac{(k+2)(k+1)x^{k+1}}{1-q^{k+2}}.
\end{align*}
For each remaining terms on the right-hand side, we have
\begin{align*}
\frac{d}{dx}\frac{x^2}{2q^2}L(x,q)^2=\frac{d}{dx}\frac{1}{2q^2}\left(\sum_{n\geq0}\frac{x^{n+1}}{1-q^{n+1}}\right)^2=\frac{1}{q^2}\sum_{n\geq0}\frac{x^{n+1}}{1-q^{n+1}}\cdot\sum_{m\geq0}\frac{(m+1)x^{m}}{1-q^{m+1}},
\end{align*}
\begin{align*}
\frac{d}{dx}\frac{x}{q}L(x,q)L(q,q)=\frac{d}{dx}\left(\sum_{m\geq0}\frac{x^{m+1}}{1-q^{m+1}}\cdot\sum_{n\geq0}\frac{q^{n-1}}{1-q^{n+1}}\right)=\sum_{m\geq0}\frac{(m+1)x^{m}}{1-q^{m+1}}\cdot\sum_{n\geq0}\frac{q^{n-1}}{1-q^{n+1}},
\end{align*}
and
\begin{align*}
\frac{d}{dx}\frac{x}{q}L(xq,q,q)=\frac{d}{dx}\sum_{n\geq0}\frac{x^{n+1}q^{n-1}}{(1-q^{n+1})^2}=\sum_{n\geq0}\frac{(n+1)x^{n}q^{n-1}}{(1-q^{n+1})^2}.
\end{align*}
Then setting $x=q$ and multiplying by $2q^3$, we obtain
\begin{align*}
\text{LHS}=\sum_{n\geq1}\sum_{m>n}\frac{2nq^{m+n}}{(1-q^n)(1-q^m)}=f_3(q)
\end{align*}
and
\begin{align*}
\text{RHS}=&\sum_{k\geq0}\frac{(k+2)(k+1)q^{k+2}}{1-q^{k+2}}-\sum_{n\geq0}\frac{2q^{n+2}}{1-q^{n+1}}\cdot\sum_{m\geq0}\frac{(m+1)q^{m}}{1-q^{m+1}}\\
&\qquad\qquad+\sum_{m\geq0}\frac{(m+1)q^{m}}{1-q^{m+1}}\cdot\sum_{n\geq0}\frac{2q^{n+2}}{1-q^{n+1}}-\sum_{n\geq0}\frac{2(n+1)q^{2n+2}}{(1-q^{n+1})^2}\\
=&\sum_{k\geq2}\frac{k(k-1)q^{k}}{1-q^{k}}-\sum_{n\geq1}\frac{2nq^{2n}}{(1-q^{n})^2}=f_1(q).
\end{align*}
Then we obtain $f_1(q)=f_3(q)$.
\end{proof}

At the conclusion of \cite{ks26}, Kumar and Singh asked whether the divisor function $\sigma_k(n)$ can arise as the coefficients of a double Lambert series similar to the one for $\sigma_1(n)$ in Conjecture \ref{AAB-conj-I}. We believe that this method of derivation is a correct approach to answering this question.

\subsection*{Acknowledgements}
The first author was supported by the National Natural Science Foundation of China (Grant No. 12401438).

\subsection*{Data availability}
No data was used for the research described in the article.

%%%%%%%%%%%%%%%%%%%%%%%%%%%%%%%%%%%%%%%%%%%%%%%%%%%%%%%%%%%%%%%%%%%%%%%%%%%%%%%


\begin{thebibliography}{10}

\bibitem{amanba26}
T. Amdeberhan, G.~E. Andrews and C.~M. Ballantine, Lambert series and double Lambert series, J. Combin. Theory Ser. A {\bf 221} (2026), Paper No. 106154, 22 pp.; MR5000708

\bibitem{adsy17}
G.~E. Andrews, A. Dixit, D. Schultz, and A.~J. Yee, Overpartitions related to the mock theta function $\omega(q)$, Acta Arith. {\bf 181} (2017), no.~3, 253--286; MR3732919

\bibitem{ady15}
G.~E. Andrews, A. Dixit and A.~J. Yee, Partitions associated with the Ramanujan/Watson mock theta functions $\omega(q)$, $\nu(q)$ and $\phi(q)$, Res. Number Theory {\bf 1} (2015), Paper No. 19, 25 pp.; MR3501003

\bibitem{be91}
B.~C. Berndt, {\it Ramanujan's notebooks. Part III}, Springer, New York, 1991; MR1117903

\bibitem{ct26}
S. P. Cui and D. Tang, Identities and transformations for Lambert series and double Lambert series, preprint, https://arxiv.org/pdf/2604.08839, 2026.

\bibitem{fang26}
Q. Fang, On the double Lambert series conjecture of Andrews-Dixit-Schultz-Yee, preprint, https://arxiv.org/pdf/2604.06242, 2026.

\bibitem{red}
G. Gasper and M. Rahman, {\it Basic hypergeometric series}, second edition, Encyclopedia of Mathematics and its Applications, 96, Cambridge Univ. Press, Cambridge, 2004; MR2128719

\bibitem{ks26}
R. Kumar and A. Singh, On a conjecture of Amdeberhan, Andrews and Ballantine for double Lambert series, preprint, https://doi.org/10.48550/arXiv.2605.21163, 2026.

\bibitem{lam65}
J.~H. Lambert, Anlage zur Architectonic, oder Theorie des ersten und des einfachen in der philosophischen und mathematischen Erkenntnis, Vol. 2, Johann Friedrich Hartenoch, Riga, 1771, Philosophische Schriften, vol. 4, Georg Olm, Hildesheim, 1965.

\bibitem{sch20}
M. D. Schmidt, A catalog of interesting and useful Lambert series identities, preprint, https://doi.org/10.48550/arXiv.2004.02976, 2020.


\end{thebibliography}
\end{document}